
\documentclass[11pt,letterpaper,reqno,abstract=true]{scrartcl}


\usepackage{amsmath}
\usepackage{amsfonts}
\usepackage{amssymb}
\usepackage{amsthm}
\usepackage{amscd}

\usepackage{tikz}
\usepackage{tikz-cd}
\usetikzlibrary{cd}

\usepackage[colorlinks=true]{hyperref}
\hypersetup{urlcolor=red, citecolor=blue}


\usepackage{latexsym}
\usepackage{mathrsfs}
\usepackage{psfrag}
\usepackage[dvips]{epsfig}
\usepackage{epsfig}
\usepackage[all]{xy}         
\usepackage{wrapfig}




\usepackage[margin=2.5cm]{geometry}





\theoremstyle{plain}                              

\newtheorem{cor}{Corollary}

\newtheorem*{mainthm}{Theorem}               
\newtheorem{lemma}{Lemma}

\theoremstyle{definition}                         

\newtheorem*{remark}{Remark} 

\theoremstyle{remark}                             





\newcommand{\R}{\mathbb{R}}                     

\newcommand{\ms}[1]{\mathscr{#1}}               


\newcommand{\del}{\partial}






\begin{document}

\title{Cross sections to flows via intrinsically harmonic forms}

\author{Slobodan N. Simi\'c}

\maketitle


\begin{abstract}
  We establish a new criterion for the existence of a global cross
  section to a non-singular volume-preserving flow on a compact
  manifold. Namely, if $\Phi$ is a non-singular smooth flow on a
  compact, connected manifold $M$ with a smooth invariant volume form
  $\Omega$, then $\Phi$ admits a global cross section if and only if
  the $(n-1)$-form $i_X \Omega$ is intrinsically harmonic, that is,
  harmonic with respect to some Riemannian metric on $M$.
\end{abstract}





\date{\today}

%

The goal of this note is to prove a simple geometric criterion for the
existence of a global cross section to a volume-preserving
non-singular flow.

The question of existence of a global cross section to a flow is a
fundamental problem in dynamical systems. Much work has been done on
this question; see for instance, \cite{fried+82, plante72,
  schwartz+57, verj+70, ghys89, sns+97, sns+section+16}. We focus on
volume-preserving non-singular flows and prove that the existence of a
global cross section is equivalent to the property of a certain
canonical invariant differential form being intrinsically
harmonic. More precisely, our main result is the following.

\begin{mainthm}
  Let $\Phi$ be a non-singular smooth \footnote{By smooth we always
    mean $C^\infty$.} flow on a smooth, compact, connected manifold
  $M$. Denote the infinitesimal generator of $\Phi$ by $X$ and assume
  that $\Phi$ preserves a smooth volume form $\Omega$. Then $\Phi$
  admits a smooth global cross section if and only if $i_X \Omega$ is
  intrinsically harmonic.
\end{mainthm}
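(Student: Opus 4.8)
The plan is to route both conditions through a single intermediate object: a smooth closed $1$-form $\alpha$ with $\alpha(X)>0$ everywhere. First I would record that $\omega:=i_X\Omega$ is automatically closed: since $\Omega$ is a top-degree form $d\Omega=0$, and invariance together with Cartan's formula gives $0=\mathcal{L}_X\Omega = d\,i_X\Omega + i_X\,d\Omega = d\omega$. Thus $d\omega=0$ holds unconditionally, and for a metric $g$ the harmonicity of $\omega$ reduces to the single coclosedness condition $d^*_g\omega=0$, equivalently $d(*_g\omega)=0$, i.e. $*_g\omega$ is a closed $1$-form. The bridge to the flow is the classical identity $i_X\vol_g = *_g(X^\flat)$ on an oriented Riemannian manifold, where $X^\flat=g(X,\cdot)$. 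Orienting $M$ by $\Omega$ and writing $\Omega=\phi\,\vol_g$ with $\phi>0$, this gives $\omega=\phi\,*_g(X^\flat)$, hence $*_g\omega=(-1)^{n-1}\phi\,X^\flat$, where $n=\dim M$.

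For the direction \emph{intrinsically harmonic $\Rightarrow$ cross section}, suppose $\omega$ is harmonic for some metric $g$, and set $\alpha := (-1)^{n-1}*_g\omega = \phi\,X^\flat$. Coclosedness of $\omega$ says precisely that $*_g\omega$, and hence $\alpha$, is closed, while $\alpha(X)=\phi\,g(X,X)=\phi\,|X|_g^2>0$ since $\phi>0$ and $X$ is non-singular. Thus $\alpha$ is a closed $1$-form strictly positive on the flow direction, and I would then invoke the classical characterization (Schwartzman--Fried, with the fibration step going back to Tischler) that a non-singular flow on a compact manifold admits a smooth global cross section if and only if such a closed $1$-form exists.

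For the converse, a global cross section yields, by the same classical characterization, a closed $1$-form $\beta$ with $\beta(X)>0$, and the task becomes to manufacture a metric $g$ exhibiting $\omega$ as harmonic. I would build $g$ so that simultaneously $X^\flat=\beta$ and $\vol_g=\Omega$: let $\xi=\ker\beta$, which is transverse to $X$ because $\beta(X)>0$, so that $TM=\R X\oplus\xi$; declare $X\perp_g\xi$ and $g(X,X)=\beta(X)$, which already forces $g(X,\cdot)=\beta$ on all of $TM$; and finally choose the restriction $g|_\xi$ so that the induced Riemannian volume equals $\Omega$, a pointwise positive rescaling of any fixed auxiliary metric on $\xi$ that leaves $X^\flat=\beta$ untouched. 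With this $g$ one gets $\omega=i_X\Omega=i_X\vol_g=*_g(X^\flat)=*_g\beta$, so $*_g\omega=(-1)^{n-1}\beta$ is closed and $\omega$ is $g$-harmonic.

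I expect the main obstacle to lie in the converse, specifically in realizing $X^\flat=\beta$ and $\vol_g=\Omega$ by one smooth metric: one must verify that the orthogonal-splitting construction is globally smooth and that the volume-matching rescaling on $\xi$ is carried out without disturbing $X^\flat=\beta$. The other essential input is the equivalence between global cross sections and closed $1$-forms positive on $X$; the delicate half there is producing an actual section from such a form, which requires approximating $\beta$ by a closed form with rational periods so that it integrates to a fibration over the circle with fibers transverse to $X$. I would isolate this equivalence as a separate lemma and cite \cite{fried+82, schwartz+57} for it.
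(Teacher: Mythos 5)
Your proposal is correct, and the forward direction (harmonic $\Rightarrow$ cross section) is essentially the paper's: extract the closed nonvanishing $1$-form $\star_g(i_X\Omega)$, observe it is positive on $X$, and invoke the Schwartzman--Plante--Fried criterion (the paper cites Plante's theorem in exactly this role). Your handling of the conformal factor $\phi$ with $\Omega=\phi\,\vol_g$ is in fact slightly more careful than the paper's, which applies its $\star$-identity without comment on whether $\Omega$ is the Riemannian volume of $g$. The converse is where you genuinely diverge. The paper does \emph{not} pass through a closed $1$-form: it reparametrizes the flow so the return time is constant, obtains from the suspension structure a smooth flow-invariant foliation $\ms{F}$ transverse to $X$, builds a metric $g$ with $X\perp T\ms{F}$, $g(X,X)$ constant, and $\vol_g=\Omega$, and then \emph{proves} that $X^\flat$ is closed by combining the Frobenius identity $\omega\wedge d\omega=0$ (integrability of $T\ms{F}$) with $L_X\omega=0$ (flow-invariance). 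You instead import the closed $1$-form $\beta$ with $\beta(X)>0$ from the cross section and design $g$ so that $X^\flat=\beta$, making closedness automatic; the metric construction (orthogonal splitting plus a rescaling of $g$ on the transverse distribution to match volumes, which does not disturb $X^\flat$) is the same in both arguments and is unproblematic. Your route is shorter and avoids the Cartan/Frobenius computation entirely, at the cost of outsourcing the existence of a smooth closed $\beta$ with $\beta(X)>0$ to Schwartzman--Fried; note that this step is not completely free, since the naive form $d(t/\tau(x))$ on the suspension is not smooth across $\Sigma$ when the return time $\tau$ is nonconstant -- one must first normalize $\tau\equiv 1$, which is precisely the content of the paper's reparametrization lemma. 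If you either prove that small fact or cite it precisely, your argument is complete and, arguably, cleaner than the paper's converse.
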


A smooth differential form $\omega$ on $M$ is called
\textsf{intrinsically harmonic} if there exists a smooth Riemannian
metric $g$ on $M$ such that $\omega$ is $g$-harmonic, i.e.,
$\Delta_g \omega = 0$, where $\Delta_g$ denotes the Laplace-Beltrami
operator on differential forms induced by $g$ (cf., \cite{warner+83,
  jost+rgga+2008}). Recall that $\Delta_g = d \delta_g + \delta_g d$,
where $d$ is the exterior differential and
$\delta_g = (-1)^{n(k+1)+1} \star_g \, d \, \star_g$ (on $k$-forms) is
the adjoint of $d$ relative to the $L^2$-inner product defined by $g$
($\star_g$ denotes the Hodge star operator). A smooth form $\omega$ is
$g$-harmonic if and only if $\omega$ is both closed ($d\omega = 0$)
and co-closed ($\delta_g \omega = 0$, i.e., $\star_g \omega$ is
closed).

A closed (compact and without boundary) submanifold $\Sigma$ of $M$ is
called a \textsf{global cross section} to a (clearly non-singular)
flow $\Phi = \{ \phi_t \}$ on $M$ if it intersects every orbit of
$\Phi$ transversely. It is not hard to see that this guarantees that
the orbit of every point $x \in \Sigma$ returns to $\Sigma$, defining
the \textsf{first-return} or \textsf{Poincar\'e map} $P$ of
$\Sigma$. More precisely, for each $x \in \Sigma$ there exists a
unique $\tau(x) > 0$ (called the \textsf{first-return time}) such that
$P(x) = \phi_{\tau(x)}(x)$ is in $\Sigma$, but
$\phi_t(x) \not\in \Sigma$ for every $t \in (0,\tau(x))$.

The first-return map $P : \Sigma \to \Sigma$ is a diffeomorphism. A
global cross section thus allows us to pass from a flow to a
diffeomorphism. To recover the flow from the first-return map one uses
the construction called \textsf{suspension}. Given a smooth closed
manifold $\Sigma$, a diffeomorphism $f : \Sigma \to \Sigma$ and a
smooth positive function (called the \textsf{ceiling} or \textsf{roof
  function}) $\tau : \Sigma \to \R$, we define
\begin{equation}  \label{eq:quotient}
  M_\tau = \Sigma_\tau/\sim_f, \quad \text{where} \qquad
  \Sigma_\tau = \{ (x,t) : x \in \Sigma, 0 \leq t \leq \tau(x) \},
\end{equation}
and $\sim_f$ is the equivalence relation generated by
$(x,\tau(x)) \sim_f (f(x),0)$. The vertical vector field $\del/\del t$
on $\Sigma_\tau$ projects to a smooth vector field $X$ on
$M_\tau$. The flow of $X$ is called the \textsf{special flow}
associated with $\Sigma$, $f$, and $\tau$. If $\tau = 1$, it is
usually called the \textsf{suspension flow} of $f$ (see
\cite{katok+95}).

It is well-known that if a flow $\Phi$ on $M$ has a global cross
section $\Sigma$ with the first-return map $P$ and the first-return
time $\tau$, then the special flow on $M_\tau$ defined by $\Sigma, P$,
and $\tau$ is smoothly orbit equivalent to $\Phi$.

We state the following result of J. Plante, which will be needed in
the proof.

\begin{mainthm}[\cite{plante72}]
  If a $C^1$ flow on a compact manifold $M$ is transverse to the
  kernel of some non-singular continuous closed 1-form $\omega$ on
  $M$, then it admits a smooth global cross section.
\end{mainthm}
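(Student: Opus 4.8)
The plan is to recover a fibration of $M$ over the circle whose fibers serve as the cross section, following the strategy behind Tischler's theorem on non-singular closed one-forms. Write $X$ for the generator of the flow. The transversality hypothesis says precisely that $\omega(X)$ never vanishes; since $M$ is compact and $\omega, X$ are continuous, $\omega(X)$ is bounded away from $0$ on each component, and after replacing $\omega$ by $-\omega$ if necessary I may assume $\omega(X) \geq c > 0$. My first reduction is to replace the merely continuous form $\omega$ by a smooth one: using de Rham regularization (mollification, which commutes with $d$, hence preserves closedness and the cohomology class) I obtain a smooth closed $1$-form that is $C^0$-close to $\omega$. Because the condition $\omega(X) > 0$ is open, any form $C^0$-close to $\omega$ still pairs positively with $X$ and is in particular non-singular; so from now on I assume $\omega$ itself is smooth, closed, non-singular, and satisfies $\omega(X) \geq c > 0$.

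The heart of the argument is to perturb $\omega$ to a closed form with \emph{cyclic period group} while keeping control of the form pointwise. Fix smooth closed $1$-forms $\omega_1, \dots, \omega_b$ representing a basis of the image of $H^1(M;\Z)$ in $H^1(M;\R)$, so each $\omega_i$ has integer periods over integral cycles. De Rham's theorem lets me write $[\omega] = \sum_i c_i [\omega_i]$, hence $\omega = \sum_i c_i \omega_i + dg$ for some smooth function $g$ and reals $c_i$. Choosing rationals $r_i = p_i/q$ (with common denominator $q$) such that $|r_i - c_i|$ is as small as I like, I set $\omega' = \sum_i r_i \omega_i + dg = \omega + \sum_i (r_i - c_i)\omega_i$. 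Then $\omega'$ is closed, is $C^\infty$-close (in particular $C^0$-close) to $\omega$, and every period of $\omega'$ lies in $\tfrac{1}{q}\Z$; thus its period group is cyclic. Taking the $r_i$ close enough keeps $\omega'(X) > 0$. I expect this step — arranging a genuinely $C^0$-small perturbation \emph{of the form itself}, not merely of its cohomology class, that simultaneously rationalizes all periods — to be the main obstacle, since it is exactly what forces the use of an integral basis together with the exact correction term $dg$.

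With cyclic periods in hand, $q\,\omega'$ has integer periods, so integration yields a well-defined smooth map
\[
  f : M \to \R/\Z = S^1, \qquad f(x) = \int_{x_0}^{x} q\,\omega' \pmod{\Z},
\]
with $df = q\,\omega'$. Since $\omega'$ is non-singular, $f$ is a submersion; as $M$ is compact, $f$ is proper, so by Ehresmann's fibration theorem $f$ is a smooth fiber bundle over $S^1$ with fiber a closed submanifold $\Sigma = f^{-1}(\theta_0)$. Finally I verify that any such fiber is a global cross section: lifting $f$ to $\tilde f : \R \to \R$ along an orbit $t \mapsto \phi_t(x)$, one computes $\tfrac{d}{dt} \tilde f(\phi_t(x)) = df(X) = q\,\omega'(X) \geq qc > 0$. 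Hence $f$ strictly and uniformly increases along every orbit, so each orbit meets every level set of $f$ — in particular $\Sigma$ — and meets it transversely because $df(X) \neq 0$. This exhibits $\Sigma$ as a smooth global cross section, completing the proof.
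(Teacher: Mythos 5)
Your proof is correct, and it reaches the conclusion by a route that differs in execution from the one the paper sketches, even though both turn on the same underlying idea of rationalizing periods. The paper follows Plante: the continuous closed form is $C^0$-approximated \emph{within the continuous category} by a continuous form $\hat\omega$, closed in the Stokes sense, with rational periods; Hartman's version of the Frobenius theorem is then invoked to integrate the continuous hyperplane field $\ker\hat\omega$, and rationality of the periods forces the integral manifolds to be compact. You instead dispose of the low regularity at the outset via de Rham regularization --- which is the right tool here, since naive chart-by-chart mollification does not globally commute with $d$, whereas the de Rham smoothing operator does and converges uniformly on continuous forms --- and then run the standard Tischler argument: correct $\omega$ by a $C^\infty$-small combination of integral forms to make the period group cyclic, integrate to a circle-valued submersion $f$ with $df = q\,\omega'$, and take a fiber. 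What your version buys is that it bypasses the continuous-category Frobenius theorem entirely and delivers the cross section directly as a fiber of a smooth submersion, so its smoothness is automatic (the paper's sketch, producing integral manifolds of a merely continuous distribution, leaves a small gap between the $C^1$ leaves one obtains and the \emph{smooth} cross section asserted in the statement); what Plante's version buys is that it never needs to leave the continuous setting, which matters in his original context. Two minor points worth tightening: the sign normalization $\omega(X) \geq c > 0$ should be made component by component if $M$ is disconnected (harmless, as you note implicitly), and the final step does not really need Ehresmann --- once $df(X) \geq qc > 0$, the level set $f^{-1}(\theta_0)$ is a closed smooth hypersurface met transversely by every orbit in forward time, which is all that a global cross section requires.
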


Plante showed that each such form $\omega$ can be $C^0$ approximated
by a closed (in the Stokes sense) continuous 1-form $\hat{\omega}$
with \emph{rational} periods. By Hartman's version of the Frobenius
theorem~\cite{hart02}, the kernel of $\hat{\omega}$ is integrable (and
transverse to the flow). Since $\hat{\omega}$ has rational periods,
its integral manifolds are compact, hence each of them is a global
cross section.

\begin{proof}[Proof of the Main Result]
  $(\Leftarrow)$ Assume $i_X \Omega$ is intrinsically harmonic and let
  $g$ be a smooth Riemannian metric such that
  $\Delta_g (i_X \Omega) = 0$. Then $i_X \Omega$ is co-closed; i.e.,
  the 1-form $\omega = \star_g(i_X \Omega)$ is closed. The following
  lemma can be found as an exercise in, e.g.,
  \cite{lee+smooth+13}. The proof is elementary and is omitted.

  \begin{lemma}    \label{lem:star}
    $\star_g(i_X \Omega) = (-1)^{n-1} g(X,\cdot)$, where $n = \dim M$.
  \end{lemma}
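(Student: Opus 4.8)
The plan is to prove the identity pointwise, since both sides are $C^\infty(M)$-linear in $X$ and $\star_g$ is an algebraic (fiberwise) operator; thus it suffices to check equality at a single point in a convenient frame. I would fix a point $p \in M$ and choose a positively oriented $g$-orthonormal coframe $(e^1,\dots,e^n)$ at $p$, with dual orthonormal frame $(e_1,\dots,e_n)$, so that the Riemannian volume form is $\Omega = e^1\wedge\cdots\wedge e^n$ (here I use the standing compatibility that $\Omega$ is the volume form of $g$, i.e. $\star_g 1 = \Omega$; this is exactly what makes the stated signs correct). By linearity in $X$ it then suffices to verify the identity for each basis vector $X = e_k$.

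For $X = e_k$ the left-hand side is computed in two steps. First, contraction gives the $(n-1)$-form
\[
  i_{e_k}\Omega = (-1)^{k-1}\, e^1\wedge\cdots\wedge\widehat{e^k}\wedge\cdots\wedge e^n,
\]
the hat denoting omission and the sign $(-1)^{k-1}$ recording the $k-1$ transpositions needed to bring $e^k$ to the front. Second, I would evaluate $\star_g$ on this decomposable $(n-1)$-form using the defining relation $\alpha\wedge\star_g\alpha = |\alpha|_g^2\,\Omega$: writing $\star_g(e^1\wedge\cdots\wedge\widehat{e^k}\wedge\cdots\wedge e^n) = c\, e^k$ and wedging, the product $(e^1\wedge\cdots\wedge\widehat{e^k}\wedge\cdots\wedge e^n)\wedge e^k$ reorders to $(-1)^{n-k}\Omega$, forcing $c = (-1)^{n-k}$. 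Multiplying the two signs yields
\[
  \star_g(i_{e_k}\Omega) = (-1)^{k-1}(-1)^{n-k}\, e^k = (-1)^{n-1}\, e^k.
\]
On the right-hand side, $g(e_k,\cdot) = e^k$ since the coframe is orthonormal, so $(-1)^{n-1} g(e_k,\cdot) = (-1)^{n-1} e^k$ as well, and the two sides agree.

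A slicker route, which I would mention as a remark, is to first record the standard identity $i_X\Omega = \star_g\bigl(g(X,\cdot)\bigr)$ (itself the special case of the computation above applied to the $1$-form $g(X,\cdot)$), and then apply $\star_g$ once more, invoking the involution formula $\star_g\star_g = (-1)^{k(n-k)}$ on $k$-forms. With $k=1$ this gives $\star_g\star_g\bigl(g(X,\cdot)\bigr) = (-1)^{n-1} g(X,\cdot)$, which is the claim. The only genuine subtlety—and the step I would be most careful about—is the sign bookkeeping: the contraction sign $(-1)^{k-1}$ and the Hodge-star sign $(-1)^{n-k}$ must be tracked against a fixed orientation convention, and their product collapsing to the orientation-independent exponent $(-1)^{n-1}$ is precisely what makes the formula clean. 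No analytic input is required; the statement is purely linear-algebraic at each point, which is why it reduces to the value of $\star_g\star_g$ on $1$-forms.
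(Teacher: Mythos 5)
Your computation is correct, and since the paper explicitly omits the proof of this lemma (deferring to an exercise in Lee), there is nothing to compare it against: the orthonormal-coframe calculation you give, with the signs $(-1)^{k-1}$ from contraction and $(-1)^{n-k}$ from the Hodge star combining to $(-1)^{n-1}$, is the standard argument, and your alternative via $\star_g\star_g = (-1)^{k(n-k)}$ on $1$-forms is an equally valid shortcut. One remark worth keeping: you are right to flag that the identity as stated requires $\Omega$ to be the Riemannian volume form of $g$ (otherwise it holds only up to the positive factor $f$ with $\Omega = f\,\mathrm{vol}_g$); the paper arranges this explicitly in Lemma~\ref{lem:harmonic}, and in the $(\Leftarrow)$ direction the extra positive factor is harmless since only the non-vanishing of $\omega(X)$ is used.
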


  Since $\omega(X) = (-1)^{n-1} g(X,X) \neq 0$, it follows that, $X$
  is transverse to the kernel of the smooth closed non-singular 1-form
  $\omega$. By the result of Plante stated above, the flow has a
  global cross section.  \\

  \noindent $(\Rightarrow)$ Assume now that $\Phi$ admits a global
  cross section $\Sigma$.

\begin{lemma}    \label{lem:reparam}
  There exists a reparametrization $\tilde{\Phi}$ of $\Phi$ whose
  first-return time with respect to $\Sigma$ is constant.
\end{lemma}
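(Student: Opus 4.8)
The plan is to realize the reparametrization as a pointwise rescaling $\tilde{X} = X/\rho$ of the generator $X$ by a smooth positive function $\rho$ chosen so that the $\rho$-weighted length of each orbit segment running from $\Sigma$ back to $\Sigma$ equals $1$. Indeed, if $\tilde{X} = X/\rho$ then $\tilde{\Phi}$ is a reparametrization of $\Phi$ (same oriented orbits), and it traverses the segment from $x\in\Sigma$ to $P(x)$ in time $\int_0^{\tau(x)} \rho(\phi_t(x))\, dt$. So the lemma is equivalent to producing a smooth positive $\rho$ with $\int_0^{\tau(x)}\rho(\phi_t(x))\,dt \equiv 1$ on $\Sigma$; such a $\rho$ is automatically bounded and $M$ is compact, so $\tilde{X}$ is complete and nonsingular.

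I would manufacture $\rho$ as $\rho = \eta(X)$ for a suitable smooth, closed, nonsingular $1$-form $\eta$. Concretely, I would construct a smooth submersion $p : M \to S^1 = \R/\Z$ with $p^{-1}(0) = \Sigma$ to which $\Phi$ is transverse, and set $\theta = p$, $\eta = p^*(d\theta)$. Then $\eta$ is smooth, closed, and nonsingular; transversality of $\Phi$ to the fibers $p^{-1}(c)$ means $\eta(X)$ never vanishes, so by connectedness of $M$ it has constant sign, and after orienting $\theta$ we may take $\eta(X) > 0$ everywhere. Putting $\tilde{X} = X/\eta(X)$ gives $\eta(\tilde{X}) = 1$, i.e. $\theta$ increases at unit rate along $\tilde{\Phi}$. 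Since $\Sigma = \theta^{-1}(0)$ and $\theta$ is strictly increasing on each orbit, an orbit leaving $\Sigma$ returns exactly when $\theta$ has increased by $1$, that is, after time precisely $1$; equivalently $\int_0^{\tau(x)}\eta(X)(\phi_t(x))\,dt = \int_x^{P(x)}\eta = 1$, the increase of $\theta$ over one loop.

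Constructing the smooth fibration $p$ is the crux, and the naive guess fails: the circle-valued coordinate $u(\phi_t(x)) = t/\tau(x) \bmod 1$ is continuous but not $C^1$ across $\Sigma$, its flow-direction derivative jumping from $1/\tau(P^{-1}(x))$ to $1/\tau(x)$. To obtain smoothness I would instead invoke the suspension description recalled before the statement: $\Phi$ is smoothly orbit equivalent, via a diffeomorphism $h : M_\tau \to M$ with $h(\Sigma\times\{0\}) = \Sigma$, to the special flow on the mapping torus $M_\tau = \Sigma_\tau/\sim_P$. A mapping torus is smoothly a fiber bundle over $S^1$ with fiber $\Sigma$; let $\pi : M_\tau \to S^1$ be its bundle projection, so $\pi^{-1}(0) = \Sigma\times\{0\}$ and the special flow (in the $\partial/\partial t$ direction) is transverse to every fiber. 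Since $h$ carries oriented special-flow orbits to oriented $\Phi$-orbits, the pushed-forward map $p = \pi\circ h^{-1}$ is the desired smooth submersion, with $p^{-1}(0) = \Sigma$ and $\Phi$ transverse to its fibers. This $p$ then feeds the construction above and yields the constant first-return time. The main obstacle, then, is precisely the smoothness of $p$ at $\Sigma$, which is exactly what the mapping-torus bundle structure supplies.
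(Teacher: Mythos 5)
Your strategy is the right one and, at bottom, coincides with the paper's: identify $M$ with a mapping torus of the return map $P$, extract a coordinate that advances at unit speed in the flow direction, and rescale $X$ accordingly. Your packaging --- a submersion $p:M\to S^1$ with $p^{-1}(0)=\Sigma$, the closed form $\eta=p^*d\theta$, and $\tilde X=X/\eta(X)$ --- is clean and would finish the proof \emph{granted the existence of $p$}, and you correctly diagnose the danger: the tautological coordinate $t/\tau(x)\bmod 1$ is not $C^1$ across $\Sigma$.

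The gap is that your construction of $p$ does not actually get past that danger. The statement ``a mapping torus fibers smoothly over $S^1$'' refers to the constant-roof model $\Sigma\times[0,1]/\!\sim_P$; for the variable-roof torus $M_\tau=\Sigma_\tau/\!\sim_P$, the projection singled out by your requirements ($\pi^{-1}(0)=\Sigma\times\{0\}$, transversality to $\partial/\partial t$) is, naively, $[x,s]\mapsto s/\tau(x)\bmod 1$, and under the identification $[x,s]\mapsto\phi_s(x)$ of $M_\tau$ with $M$ this is \emph{exactly} the non-smooth coordinate you rejected: its derivative along $\partial/\partial t$ still jumps from $1/\tau(P^{-1}(y))$ to $1/\tau(y)$ at the gluing. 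So the existence of a smooth $\pi$ with your three properties is precisely the crux of the lemma and is not supplied by the general fact you invoke. The missing idea is a jet-matching construction: define $h(x,t)=\phi_{\sigma(x,t)}(x)$ where $\sigma(x,\cdot):[0,1]\to[0,\tau(x)]$ is an increasing diffeomorphism with $\sigma(x,t)=ct$ near $t=0$ and $\sigma(x,t)=\tau(x)+c(t-1)$ near $t=1$ for a single constant $0<c<\min\tau$ (e.g.\ $\sigma(x,t)=ct+(\tau(x)-c)\beta(t)$ with $\beta$ a smooth nondecreasing step function); then the two descriptions of $h$ across the gluing agree identically, $h$ descends to a diffeomorphism of the unit-roof mapping torus onto $M$ carrying $\Sigma\times\{0\}$ to $\Sigma$ and vertical lines to orbits, and $p=t\circ h^{-1}$ is your submersion. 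I note, for the comparison you were asked about, that the paper's own proof leans on the same point: the map $S(x,t)=(x,t\tau(x))$ induces only a homeomorphism of the quotients, and the resulting function $u$ equals $\tau(x)$ along the orbit segment from $x$ to $P(x)$, hence is discontinuous at returns unless $\tau\circ P=\tau$; the same jet-matching repairs both arguments.
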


\begin{proof}
  Let $\tau$ and $P$ be the first-return time and first-return map of
  $\Sigma$, respectively. Slightly abusing the notation, we denote by
  $\sim_P$ the equivalence relations on both $\Sigma_\tau$ and
  $\Sigma_1$ generated by $(x,\tau(x)) \sim_P (P(x),0)$ (on
  $\Sigma_\tau$) and by $(x,1) \sim_P (P(x),0)$ (on $\Sigma_1$). Let
  $\Sigma_\tau$, $\Sigma_1$, $M_\tau$, and $M_1$ be defined as in
  \eqref{eq:quotient}.

  It is not hard to see that $\Sigma_1$ and $\Sigma_\tau$ are
  diffeomorphic via the map $S : \Sigma_1 \to \Sigma_\tau$ defined by
  \begin{displaymath}
    S(x,t) = (x, t \tau(x)).
  \end{displaymath}
  Furthermore, $M_\tau = \Sigma_\tau/\! \sim_P$ and
  $M_1 = \Sigma_1/\! \sim_P$ are both diffeomorphic to $M$. To
  simplify the notation, we will identify them both with $M$ via these
  diffeomorphisms. Since $S$ maps equivalence classes to equivalence
  classes, we have the following commutative diagram:
  \begin{displaymath}
    \begin{tikzcd}
      \Sigma_1 \arrow[r, "S"] \arrow[d,"\pi_1"]
      & \Sigma_\tau \arrow[d, "\pi_\tau"] \\
      M \arrow[r, "\text{id}"] & M,
    \end{tikzcd}
  \end{displaymath}
  where $\pi_\tau : \Sigma_\tau \to M$ and $\pi_1: \Sigma_1 \to M$ are
  the corresponding quotient maps.
  
  For simplicity we denote the vertical vector fields on both
  $\Sigma_1$ and $\Sigma_\tau$ by $\frac{\del}{\del t}$.

  Observe that $(\pi_{\tau})_\star(\frac{\del}{\del t}) = X$. Let
  $\tilde{X} = (\pi_1)_\star(\frac{\del}{\del t})$. Since $S_\star$
  maps $\frac{\del}{\del t}$ (on $\Sigma_1$) to a scalar multiple of
  $\frac{\del}{\del t}$ (on $\Sigma_\tau$) , it follows that
  $\tilde{X} = u X$ for some smooth positive function $u : M \to
  \R$. Thus $\tilde{X}$ is a reparametrization of $X$ and $\Sigma$ is
  clearly a global cross section for its flow, $\tilde{\Phi}$, with
  first-return time equal to 1. This completes the proof of the Lemma.
\end{proof}

\begin{lemma}   \label{lem:harmonic}
  Let $X$ be a smooth non-singular vector field on $M$ with flow
  $\Phi$. Assume $E$ is a smooth integrable distribution on $M$ such
  that $TM = \R X \oplus E$ and $E$ is invariant under the flow. If
  $\Omega$ is a smooth volume form invariant relative to $\Phi$, then
  $i_X \Omega$ is intrinsically harmonic.
\end{lemma}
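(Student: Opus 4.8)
The plan is to manufacture a single Riemannian metric $g$ with respect to which $i_X\Omega$ is simultaneously closed and co-closed; since $M$ is closed, this is precisely $g$-harmonicity, which is what intrinsic harmonicity demands. Closedness comes for free from invariance of $\Omega$ and uses neither hypothesis on $E$: because $\Phi$ preserves $\Omega$ we have $\mathcal{L}_X\Omega = 0$, and since $\Omega$ is a top-degree form $d\Omega = 0$, so Cartan's formula $\mathcal{L}_X\Omega = d\,i_X\Omega + i_X\,d\Omega$ forces $d(i_X\Omega) = 0$.

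For co-closedness I would first record the $1$-form $\eta$ dual to the splitting, i.e. the unique $1$-form with $\eta(X) = 1$ and $\ker\eta = E$. The heart of the argument is the claim $d\eta = 0$, and this is the step where both hypotheses on $E$ are spent. Evaluating $d\eta(Y,Z) = Y(\eta(Z)) - Z(\eta(Y)) - \eta([Y,Z])$ on frame fields adapted to $TM = \R X\oplus E$: when $Y,Z\in\Gamma(E)$, integrability of $E$ forces $[Y,Z]\in\Gamma(E)$, so $\eta([Y,Z]) = 0$ while $\eta(Y) = \eta(Z) = 0$; when $Y = X$ and $Z\in\Gamma(E)$, flow-invariance of $E$ gives $[X,Z] = \mathcal{L}_X Z\in\Gamma(E)$, hence $\eta([X,Z]) = 0$, and the remaining terms vanish because $\eta(X)\equiv 1$ and $\eta(Z)\equiv 0$ are constant. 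Thus $\eta$ is a closed $1$-form.

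It then remains to choose $g$ so that $\star_g(i_X\Omega)$ is a constant multiple of $\eta$. I would build $g$ adapted to the splitting by declaring $X\perp E$ and $g(X,X) = 1$---so that $g(X,\cdot) = \eta$---and by choosing the fibre metric $g|_E$ so that its induced volume form equals the restriction $(i_X\Omega)|_E$. This restriction is a nowhere-vanishing $(n-1)$-form on $E$, since for an $E$-frame $e_1,\dots,e_{n-1}$ one has $(i_X\Omega)(e_1,\dots,e_{n-1}) = \Omega(X,e_1,\dots,e_{n-1})\neq 0$, and $E$ is orientable because $M$ is (it carries $\Omega$) and $\R X$ is oriented by $X$; hence such a smooth $g|_E$ exists. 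A frame computation then gives $\vol_g = \eta\wedge i_X\Omega$, and from $0 = i_X(\eta\wedge\Omega) = \Omega - \eta\wedge i_X\Omega$ one gets $\eta\wedge i_X\Omega = \Omega$, so $\vol_g = \Omega$. Now Lemma~\ref{lem:star} applies verbatim and yields $\star_g(i_X\Omega) = (-1)^{n-1}g(X,\cdot) = (-1)^{n-1}\eta$, which is closed by the previous paragraph. Therefore $i_X\Omega$ is co-closed, and together with closedness we conclude $\Delta_g(i_X\Omega) = 0$.

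The main obstacle---and the reason the volume matching is not cosmetic---is that an arbitrary metric merely adapted to the splitting only delivers $\star_g(i_X\Omega) = (-1)^{n-1}\lambda\,\eta$ for some positive function $\lambda$, and $\lambda\eta$ need not be closed unless $\lambda$ is constant. Forcing $\vol_g = \Omega$ is exactly what pins $\lambda\equiv 1$ and lets the closedness of $\eta$ do its work; the genuine content of the lemma is thus concentrated in the identity $d\eta=0$, where integrability and flow-invariance of $E$ combine.
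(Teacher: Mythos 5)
Your proposal is correct and follows essentially the same route as the paper: both arguments build a metric adapted to the splitting $TM=\R X\oplus E$ normalized so that $\vol_g=\Omega$, invoke Lemma~\ref{lem:star} to identify $\star_g(i_X\Omega)$ with a multiple of the $1$-form $\eta$ dual to the splitting, and then deduce $d\eta=0$ from integrability of $E$ (for the $E$--$E$ components) together with flow-invariance of $E$ (for the $X$--$E$ components). The only differences are presentational --- you verify $d\eta=0$ directly from the bracket formula where the paper uses Frobenius plus Cartan's formula, and you are more explicit about why a metric with $\vol_g=\Omega$ exists and why that normalization is essential --- so no gap to report.
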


\begin{proof}
  Let $g$ be any smooth metric such that: (1) $g(X,X) = (-1)^{n-1}$ at
  every point, and (2) $X$ is orthogonal to $E$ relative to $g$. By
  multiplying the restriction of $g$ on $E$ by a suitable smooth
  function, we can arrange that the Riemannian volume form be
  precisely $\Omega$ (without affecting (1) and (2)). We claim that
  $i_X \Omega$ is $g$-harmonic. Since $\Omega$ is invariant under the
  flow, $i_X \Omega$ is clearly closed. Indeed:
  \begin{displaymath}
    d i_X \Omega = (d i_X + i_X d) \Omega = L_X \Omega = 0,
  \end{displaymath}
  by Cartan's formula. Let us show that
  $\omega = \star_g (i_X \Omega)$ is also closed. By
  Lemma~\ref{lem:star}, we have $\omega = g(X,\cdot)$. Thus
  $\text{Ker}(\omega) = E$ and $\omega(X) = 1$. Since $E$ is
  integrable, the Frobenius theorem yields
  \begin{displaymath}
    \omega \wedge d \omega = 0.
  \end{displaymath}
  To prove that $\omega$ is closed, it is enough to show that
  $d\omega(X,V) = 0$ and $d\omega(V,W) = 0$, for any two smooth local
  sections $V, W$ of $E$. We have:
  \begin{displaymath}
    0 = (\omega \wedge d\omega)(X,V,W) 
    = \omega(X) \: d\omega(V,W) 
    = d \omega(V,W),
  \end{displaymath}
  so $d \omega(V,W) = 0$. Furthermore:
  \begin{align*}
    d\omega(X,V) & = (i_X d\omega)(V) \\
    & = (i_X d \omega + d i_X\omega)(V) \\
    & = L_X \omega(V) \\
    & = 0.
  \end{align*}
  Thus $d\omega = 0$, completing the proof of the lemma.
\end{proof}

Let $\tilde{\Phi} = \{ \tilde{\phi}_t \}$, $\tilde{X}$, and $u$ be as
in Lemma~\ref{lem:reparam}. Note that
$\tilde{\phi}_1(\Sigma) = \Sigma$ and let $\ms{F}$ be the foliation of
$M$ with leaves $\tilde{\phi}_t(\Sigma)$, for $t \in \R$. Let
$E = T\ms{F}$ be the distribution tangent to $\ms{F}$. Since $\Sigma$
and $\Phi$ are smooth, so is $E$. Moreover, $E$ is invariant under the flow.

Set $\tilde{\Omega} = (1/u) \Omega$. It is clear that $\tilde{\Omega}$
is a volume form and that $i_{\tilde{X}} \tilde{\Omega} = i_X
\Omega$. It therefore suffices to show that
$i_{\tilde{X}} \tilde{\Omega}$ is intrinsically harmonic, which
immediately follows from Lemma~\ref{lem:harmonic}. This completes the
proof.
\end{proof}

\begin{cor}
  Let $\Phi$ be a non-singular smooth flow on a smooth, compact,
  connected manifold $M$, with infinitesimal generator $X$. Assume
  $\Phi$ preserves a smooth volume form $\Omega$. If $i_X \Omega$ is
  intrinsically harmonic, then
  $[i_X \Omega] \neq \mathbf{0} \in H^{n-1}_{\text{\emph{de Rham}}}(M)$.
\end{cor}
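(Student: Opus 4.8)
The plan is to argue directly from the definition of intrinsic harmonicity together with standard Hodge theory, using only that $M$ is compact. The single substantive point is that $i_X \Omega$ is not the zero form; once this is known, the nonvanishing of its cohomology class is a formal consequence of the uniqueness of harmonic representatives.

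First I would observe that $i_X \Omega$ is in fact nowhere-vanishing. At any point $p \in M$ the generator $X(p) \neq 0$, because $\Phi$ is non-singular, and $\Omega_p$ is a nondegenerate alternating $n$-form; hence one may extend $X(p)$ to a basis $X(p), v_2, \ldots, v_n$ of $T_p M$ with $\Omega_p(X(p), v_2, \ldots, v_n) \neq 0$, which shows $(i_X \Omega)_p(v_2, \ldots, v_n) \neq 0$. In particular $i_X \Omega \not\equiv 0$. Note also that $i_X\Omega$ is closed, since $d\, i_X\Omega = L_X \Omega = 0$ by Cartan's formula and invariance of $\Omega$, so it does represent a class in $H^{n-1}_{\text{de Rham}}(M)$.

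Next, by hypothesis there is a smooth metric $g$ with $\Delta_g(i_X \Omega) = 0$, so $i_X \Omega$ is a $g$-harmonic $(n-1)$-form. Because $M$ is compact, the Hodge theorem provides an isomorphism from the space $\mathcal{H}^{n-1}_g$ of $g$-harmonic $(n-1)$-forms onto $H^{n-1}_{\text{de Rham}}(M)$, sending a harmonic form to its de Rham class. In particular this map is injective, and the harmonic representative of the zero class is the zero form. Since $i_X \Omega$ is harmonic and nonzero, it cannot lie in the zero class; therefore $[i_X \Omega] \neq \mathbf{0}$.

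I do not expect any genuine obstacle: the nonvanishing of $i_X \Omega$ is immediate from transversality of $X$ to $\Omega$, and the rest is the standard fact that a nontrivial harmonic form detects a nontrivial cohomology class. The only place compactness of $M$ enters is in invoking the Hodge theorem, which is exactly where the argument would fail on an open manifold. Alternatively, one could bypass Hodge theory and argue geometrically: by the Main Theorem, intrinsic harmonicity yields a global cross section $\Sigma$, and transversality of $X$ to $\Sigma$ makes $i_X \Omega|_\Sigma$ a nowhere-vanishing $(n-1)$-form on $\Sigma$; orienting $\Sigma$ by this form gives $\int_\Sigma i_X \Omega > 0$, so $[i_X \Omega]$ pairs nontrivially with the cycle $[\Sigma] \in H_{n-1}(M;\R)$ and is hence nonzero.
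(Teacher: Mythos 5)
Your proposal is correct, but your primary argument is genuinely different from the paper's. The paper deduces the corollary from the Main Theorem: intrinsic harmonicity yields a global cross section $\Sigma$, transversality of $X$ to $\Sigma$ makes $i_X\Omega$ a volume form on $\Sigma$, so $\int_\Sigma i_X\Omega \neq 0$, and Stokes's theorem then rules out exactness --- exactly the ``alternative'' you sketch in your last sentence. Your main route instead goes through the Hodge theorem: $i_X\Omega$ is nowhere zero because $X$ is non-singular and $\Omega$ is a volume form, it is harmonic for some metric $g$ by hypothesis, and on a compact (oriented, which holds here since $M$ carries a volume form) manifold the map from $g$-harmonic forms to de Rham cohomology is injective, so a nonzero harmonic form cannot represent the zero class. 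This is a clean and arguably more economical argument: it uses only the definition of intrinsic harmonicity plus standard Hodge theory, does not invoke the Main Theorem at all (indeed harmonicity already gives closedness, so even the invariance of $\Omega$ is not strictly needed beyond guaranteeing the hypotheses make sense), and it generalizes to any nonvanishing intrinsically harmonic form whether or not it arises from a flow. What the paper's route buys in exchange is geometric content: it exhibits an explicit closed submanifold $\Sigma$ against which $[i_X\Omega]$ pairs nontrivially, which is more information than mere nonvanishing of the class. Both arguments are sound.
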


\begin{proof}
  By the main result, $\Phi$ admits a global cross section
  $\Sigma$. Since $X$ is transverse to $\Sigma$, $i_X \Omega$ is a
  volume form for $\Sigma$, so $\int_\Sigma i_X \Omega \neq 0$. If
  $i_X \Omega$ were exact, Stokes's theorem would imply
  $\int_\Sigma i_X \Omega = 0$.
\end{proof}

\begin{cor}
  If $X$ is the geodesic vector field of a closed Riemannian manifold
  of negative sectional curvature and $\Omega$ denotes the canonical
  invariant volume form, then $i_X \Omega$ is not harmonic with
  respect to any Riemannian metric.
\end{cor}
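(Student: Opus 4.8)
The plan is to sidestep any direct dynamical analysis of the (Anosov) geodesic flow and instead invoke the contrapositive of the preceding corollary: it suffices to show that $i_X \Omega$ is \emph{exact}, for then $[i_X\Omega] = \mathbf{0}$ in de Rham cohomology forces $i_X\Omega$ to fail intrinsic harmonicity. Thus the entire problem reduces to a single cohomological computation of the class $[i_X\Omega]$.

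To carry this out I would first set up the contact-geometric picture. Here $M = T^1 N$ is the unit (co)tangent bundle of the closed manifold $N$, with $d = \dim N \geq 2$ and $n = \dim M = 2d-1$. The restriction to $M$ of the canonical Liouville $1$-form is a contact form $\alpha$ whose Reeb vector field is precisely the geodesic field $X$; in particular $\alpha(X) = 1$ and $i_X\, d\alpha = 0$. Moreover the canonical invariant (Liouville) volume form equals, up to a positive constant, the contact volume $\Omega = \alpha \wedge (d\alpha)^{d-1}$, which the Reeb flow preserves.

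The key computation is then immediate. Since $i_X$ is an antiderivation, using $\alpha(X) = 1$ and $i_X\, d\alpha = 0$ I would obtain
\[
  i_X \Omega = (i_X\alpha)\,(d\alpha)^{d-1} - \alpha \wedge i_X\big((d\alpha)^{d-1}\big) = (d\alpha)^{d-1}.
\]
Because $d \geq 2$ and $d\alpha$ is closed, we have $(d\alpha)^{d-1} = d\big(\alpha \wedge (d\alpha)^{d-2}\big)$, so $i_X\Omega$ is exact and $[i_X\Omega] = \mathbf{0} \in H^{n-1}_{\mathrm{dR}}(M)$. The previous corollary now yields the conclusion directly: were $i_X\Omega$ intrinsically harmonic, its class would be nonzero, a contradiction.

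I expect no genuine obstacle in the estimates; the only points needing care are the two standard identifications in the second paragraph, namely that $X$ is the Reeb field of $\alpha$ and that the canonical volume is the contact volume $\alpha \wedge (d\alpha)^{d-1}$, which fix the precise normalizations. It is worth remarking that negative curvature plays no role in this argument: the same computation shows $i_X\Omega$ is exact for the geodesic flow of \emph{any} closed Riemannian manifold of dimension at least two, so such flows never admit a global cross section. Negative curvature merely singles out the dynamically interesting Anosov case.
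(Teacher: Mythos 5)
Your argument is correct, but it takes a genuinely different route from the paper's. The paper's entire proof is one sentence: it is well known that the geodesic flow of a closed negatively curved manifold admits no global cross section (classically, because the flip $v\mapsto -v$ conjugates the flow to its time reversal, so periodic orbits occur in pairs with opposite homology classes, which is incompatible with the positivity of intersection numbers with a cross section); the conclusion then follows from the $(\Leftarrow)$ direction of the Main Theorem. You instead bypass the dynamics entirely: identifying $X$ as the Reeb field of the canonical contact form $\alpha$ on $T^1N$ and $\Omega$ with $\alpha\wedge(d\alpha)^{d-1}$, you compute $i_X\Omega=(d\alpha)^{d-1}=d\bigl(\alpha\wedge(d\alpha)^{d-2}\bigr)$, so $[i_X\Omega]=\mathbf{0}$ and the first Corollary applies. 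The computation is right ($i_X$ is an antiderivation with $\alpha(X)=1$ and $i_X\,d\alpha=0$), and the two normalizations you flag are standard. Your route buys several things: it needs no input from hyperbolic dynamics; it proves the stronger statement that $i_X\Omega$ is exact, at which point Hodge theory alone finishes the job (a nonzero exact form on a compact manifold is never harmonic for any metric, since $\norm{\omega}^2=\langle d\eta,\omega\rangle=\langle\eta,\delta_g\omega\rangle=0$), so you do not actually need the Corollary or the Main Theorem at all; and, as you observe, it applies to the geodesic flow of every closed Riemannian manifold of dimension at least two, negative curvature playing no role. What the paper's route buys is brevity, at the price of citing an external well-known fact.
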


\begin{proof}
  It is well-known that $X$ does not admit a global cross section. 
\end{proof}

\begin{remark}
  \begin{enumerate}
  \item[(a)] Intrinsically harmonic closed $k$-forms on $n$-manifolds
    were characterized by E. Calabi \cite{calabi+69} and E. Volkov
    \cite{volkov+08} for $k = 1$, and K. Honda \cite{honda+97} for
    $k = n-1$. For a closed \emph{non-vanishing} $(n-1)$-form $\Theta$
    on a smooth manifold $M$, Honda showed that $\Theta$ is
    intrinsically harmonic if and only if it is
    \emph{transitive}. This means that through every point of $M$
    there passes an $(n-1)$-dimensional submanifold $N$ such that the
    restriction of $\Theta$ to $N$ is a volume form for $N$. Note that
    in our setting where $\Theta = i_X \Omega$, this condition
    strongly suggests that $X$ admits a cross section.

  \item[(b)] Observe that by Lemma~\ref{lem:star}, $i_X \Omega$ is
    $g$-harmonic if and only if the 1-form $X^\flat = g(X,\cdot)$ is
    closed. This in turn is equivalent to $v \mapsto \nabla^g_v X$
    being a symmetric linear operator or equivalently, to $\nabla^g X$
    being a symmetric $(1,1)$-tensor (cf., \cite{petersen+06}, \S
    9.2). We therefore have:
  \end{enumerate}
\end{remark}

\begin{cor}
  Let $\Phi, X$, and $\Omega$ satisfy the assumptions of the main
  result. Then $\Phi$ admits a global cross section if and only if
  there exists a smooth Riemannian metric $g$ on $M$ such that
  $\nabla^g X$ is a symmetric $(1,1)$-tensor.
\end{cor}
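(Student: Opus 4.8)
The plan is to deduce this corollary directly from the Main Theorem together with the pointwise algebraic observation recorded in Remark~(b). Since the Main Theorem already identifies the existence of a global cross section with intrinsic harmonicity of $i_X\Omega$, the only work remaining is to translate intrinsic harmonicity of $i_X\Omega$ into the symmetry condition on $\nabla^g X$, for one and the same metric $g$.

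First I would observe that, because $\Phi$ preserves $\Omega$, the form $i_X\Omega$ is automatically closed: Cartan's formula gives $d\, i_X\Omega = L_X\Omega - i_X\, d\Omega = L_X\Omega = 0$. Hence, for any metric $g$, the form $i_X\Omega$ is $g$-harmonic if and only if it is co-closed, i.e. if and only if $\star_g(i_X\Omega)$ is closed. By Lemma~\ref{lem:star}, $\star_g(i_X\Omega) = (-1)^{n-1}X^\flat$, where $X^\flat = g(X,\cdot)$; since $(-1)^{n-1} \neq 0$, it follows that $i_X\Omega$ is $g$-harmonic if and only if the $1$-form $X^\flat$ is closed.

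The key step is then the standard identity, valid for the Levi-Civita connection $\nabla^g$,
\begin{displaymath}
  dX^\flat(V,W) = g(\nabla^g_V X, W) - g(\nabla^g_W X, V),
\end{displaymath}
which follows from expanding $dX^\flat(V,W) = V\big(g(X,W)\big) - W\big(g(X,V)\big) - g(X,[V,W])$ and using metric compatibility together with the vanishing of torsion. Writing $A = \nabla^g X$ for the $(1,1)$-tensor $V \mapsto \nabla^g_V X$, this reads $dX^\flat(V,W) = g(AV,W) - g(AW,V)$, so $dX^\flat = 0$ if and only if $A$ is $g$-self-adjoint, i.e. $\nabla^g X$ is a symmetric $(1,1)$-tensor. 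Combining the two equivalences, for a fixed metric $g$ the form $i_X\Omega$ is $g$-harmonic precisely when $\nabla^g X$ is symmetric.

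Finally I would assemble the two directions. If $\Phi$ admits a global cross section, the Main Theorem furnishes a metric $g$ with $\Delta_g(i_X\Omega)=0$; by the chain of equivalences above, this same $g$ makes $\nabla^g X$ symmetric. Conversely, if some $g$ makes $\nabla^g X$ symmetric, then $i_X\Omega$ is $g$-harmonic, hence intrinsically harmonic, and the Main Theorem produces a global cross section. I expect no genuine obstacle here: all the substance is carried by the Main Theorem, and what remains is the torsion-free identity above, which is routine. The only point demanding a little care is to keep the \emph{same} metric $g$ throughout each implication, so that ``harmonic for $g$'' and ``$\nabla^g X$ symmetric'' refer to one metric rather than to two a priori different ones.
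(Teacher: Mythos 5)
Your proposal is correct and follows essentially the same route as the paper: the paper derives this corollary from the Main Theorem together with Remark (b), which records exactly your chain of equivalences ($i_X\Omega$ $g$-harmonic $\Leftrightarrow$ $X^\flat$ closed, via Lemma~\ref{lem:star}, $\Leftrightarrow$ $\nabla^g X$ symmetric, citing Petersen for the torsion-free identity you write out). The only difference is that you spell out the details the paper delegates to a reference.
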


\paragraph{Acknowledgement.} This work was partially supported by SJSU
Research, Scholarship, and Creative Activity grants.


\bibliographystyle{amsalpha} 

\begin{thebibliography}{War83}

\bibitem[Cal69]{calabi+69}
Eugenio Calabi, \emph{An intrinsic characterization of harmonic one-forms},
  Global {A}nalysis ({P}apers in {H}onor of {K}. {K}odaira), Univ. Tokyo Press,
  Tokyo, 1969, pp.~101--117.

\bibitem[EG89]{ghys89}
\'{E}tienne Ghys, \emph{Codimension one {A}nosov flows and suspensions},
  Lecture {N}otes in {M}athematics, vol. 1331, pp.~59--72, Springer Verlag,
  1989.

\bibitem[Fri82]{fried+82}
David Fried, \emph{The geometry of cross sections to flows}, Topology
  \textbf{21} (1982), 353--371.

\bibitem[Har02]{hart02}
Philip Hartman, \emph{Ordinary {D}ifferential {E}quations}, second ed.,
  Classics in {A}pplied {M}athematics, vol.~38, {SIAM}, 2002.

\bibitem[Hon97]{honda+97}
Ko~Honda, \emph{On harmonic forms for generic metrics}, Ph.D. thesis, Princeton
  University, 1997.

\bibitem[Jos08]{jost+rgga+2008}
J\"urgen Jost, \emph{Riemannian {G}eometry and {G}eometric {A}nalysis}, fifth
  ed., Universitext, Springer, 2008.

\bibitem[KH95]{katok+95}
Anatole Katok and Boris Hasselblatt, \emph{Introduction to the {M}odern
  {T}heory of {D}ynamical {S}ystems}, Encyclopedia of {M}athematics and its
  {A}pplications, vol.~54, Cambridge {U}niversity {P}ress, 1995.

\bibitem[Lee13]{lee+smooth+13}
John~M. Lee, \emph{Introduction to {S}mooth {M}anifolds}, second ed., Grad.
  Text in Math., vol. 218, Springer, New York, 2013.

\bibitem[Pet16]{petersen+06}
Peter Petersen, \emph{Riemannian {G}eometry}, third ed., Grad. Text in Math.,
  vol. 171, Springer, New York, 2016.

\bibitem[Pla72]{plante72}
Joseph Plante, \emph{Anosov flows}, Amer. J. of Math. \textbf{94} (1972),
  729--754.

\bibitem[Sch57]{schwartz+57}
Saul Schwartzman, \emph{Asymptotic cycles}, Annals of {M}ath. \textbf{66}
  (1957), 270--284.

\bibitem[Sim97]{sns+97}
Slobodan~N. Simi\'c, \emph{Codimension one {A}nosov flows and a conjecture of
  {V}erjovsky}, Ergodic Theory Dynam. Systems \textbf{17} (1997), 1211--1231.

\bibitem[Sim16]{sns+section+16}
Slobodan~N. Simi\'{c}, \emph{Global cross sections for {A}nosov flows}, Ergodic
  Theory Dynam. Systems \textbf{36} (2016), no.~8, 2661--2674.

\bibitem[Ver70]{verj+70}
Alberto Verjovsky, \emph{Flows with cross sections}, Proc. Nat. Acad. Sci.
  U.S.A. \textbf{66} (1970), 1154--1156. \MR{0268916}

\bibitem[Vol08]{volkov+08}
Evegeny Volkov, \emph{Characterization of intrinsically harmonic forms}, J.
  Topol. (2008), no.~3, 643--650.

\bibitem[War83]{warner+83}
Frank~W. Warner, \emph{Foundations of {D}ifferentiable {M}anifolds and {L}ie
  {G}roups}, Graduate {T}exts in {M}ath., no.~94, Springer-Verlag, 1983.

\end{thebibliography}

\providecommand{\bysame}{\leavevmode\hbox to3em{\hrulefill}\thinspace}
\providecommand{\MR}{\relax\ifhmode\unskip\space\fi MR }
\providecommand{\MRhref}[2]{%
  \href{http://www.ams.org/mathscinet-getitem?mr=#1}{#2}
}
\providecommand{\href}[2]{#2}

\end{document}